\documentclass[11pt]{article}
\usepackage[margin=1in]{geometry}
\usepackage{amsfonts}
\usepackage{pgf}
\usepackage{amsmath, amsthm, amssymb, graphicx}

\newtheorem{theorem}{Theorem}[section]
\newtheorem{lemma}[theorem]{Lemma}
\newtheorem{proposition}[theorem]{Proposition}

\newenvironment{prf}[1]{\trivlist
\item[\hskip
\labelsep{\it #1.\hspace*{.3em}}]}{%~\hspace{\fill}~$\square$%
\endtrivlist}

\newtheorem{predefinition}[theorem]{Definition}
\newenvironment{definition}{\begin{predefinition}\rm}{\end{predefinition}}
\newtheorem{preremark}[theorem]{Remark}
\newenvironment{remark}{\begin{preremark}\rm}{\end{preremark}}
\newtheorem{prenotation}[theorem]{Notation}

\newtheorem{preexample}[theorem]{Example}

\newtheorem{preclaim}[theorem]{Claim}

\newtheorem{prequestion}[theorem]{Question}

 \makeatletter
\def\emppsubsection{\@startsection{subsection}{2}{\z@}{-3.25ex plus -1ex minus -.2ex}{-1em}{\bf}}

\makeatother

\newcommand{\ZZ}{{\mathbb Z}}
\newcommand{\FF}{{\mathbb F}}
\newcommand{\Jac}{\textrm{Jac}}

\author{Shawn Farnell and Rachel Pries}
\title{Families of Artin-Schreier curves with Cartier-Manin matrix of constant rank}
\date{\today}

%%%%%%%%%%%%%%%%
\begin{document}
%%%%%%%%%%%%%%%%

\maketitle

\begin{abstract}
Let $k$ be an algebraically closed field of characteristic $p > 0$. 
Every Artin-Schreier $k$-curve $X$ has an equation of the form $y^p-y=f(x)$ for some $f(x) \in k(x)$ 
such that $p$ does not divide the least common multiple $L$ of the orders of the poles of $f(x)$. 
Under the condition that $p \equiv 1 \bmod L$, Zhu proved that the Newton polygon of the $L$-function of $X$ is determined by
the Hodge polygon of $f(x)$.  In particular, the Newton polygon depends only on the
orders of the poles of $f(x)$ and not on the location of the poles or otherwise on the coefficients of $f(x)$.  
In this paper, we prove an analogous result about the $a$-number of the $p$-torsion group scheme of the Jacobian of $X$, providing the first non-trivial examples of families of Jacobians with constant $a$-number.  Equivalently, we consider the semi-linear Cartier operator on the sheaf of regular 1-forms of $X$ and provide the first non-trivial examples of families of curves whose Cartier-Manin matrix has constant rank.\\
Keywords: Cartier operator, Cartier-Manin matrix, Artin-Schreier curve, Jacobian, a-number.\\
MSC: 15A04, 15B33, 11G20, 14H40.
\end{abstract}

%%%%%%%%%%%%%%%%%%%%%%
\section{Introduction}
%%%%%%%%%%%%%%%%%%%%%%

Suppose $k$ is an algebraically closed field of characteristic $p > 0$ and 
$X$ is an Artin-Schreier $k$-curve, namely a smooth projective connected $k$-curve which is a 
$\ZZ/p$-Galois cover of the projective line. 
Studying the $p$-power torsion of the Jacobian of $X$ is simultaneously feasible and challenging.
For example, zeta functions of Artin-Schreier curves over finite fields are analyzed in 
\cite{LauderWan2, LauderWan, NartSadornil, wan:weil}. 
Newton polygons of Artin-Schreier curves are the focus of the papers \cite{Blache,Blache2,BlacheFerardZhu,SholtenZhu, GeerVlugt}.

Every Artin-Schreier $k$-curve $X$ has an equation of the form $y^p-y=f(x)$ for some non-constant rational function $f(x) \in k(x)$ 
such that $p$ does not divide the order of any of the poles of $f(x)$.
The genus of $X$ depends only on the orders of the poles of $f(x)$.
Let $m+1$ denote the number of poles of $f(x)$ and let 
$d_0, \ldots, d_{m}$ denote the orders of the poles. 
By the Riemann-Hurwitz formula, the genus of $X$ is $g_X =D(p-1)/2$ where 
$D=\sum_{j=0}^{m} (d_{j}+1) - 2$.
By definition, the $p$-rank of the Jacobian ${\rm Jac}(X)$ of $X$ 
is the dimension $s_X$ of ${\rm Hom}_{\mathbb F_p}(\mu_{p},\text{Jac}(X)[p])$
where $\mu_p$ denotes the kernel of Frobenius morphism $F$ on the multiplicative group scheme $\mathbb{G}_{m}$.
The $p$-rank also equals the length of the slope $0$ portion of the Newton polygon.  
For an Artin-Schreier curve $X$, the $p$-rank $s_X$ equals $m(p-1)$ by the Deuring-Shafarevich formula, and thus depends only on the number of poles of $f(x).$ 

In most cases, the Newton polygon of $X$ is not determined by the orders of the poles of $f(x)$.
One exception was found by Zhu: 
let $L$ denote the least common multiple of the orders of the poles of $f(x)$; 
under the condition that $p \equiv 1 \bmod L$,
the Newton polygon of $X$, shrunk by the factor $p-1$ in the horizontal and vertical direction, 
equals the Hodge polygon of $f(x)$ \cite[Corollary 1.3]{Zhu:newtonoverhodge}, see Remark
\ref{Rzhu}.  
In particular, this means that the Newton polygon depends only on the
orders of the poles of $f(x)$ and not on the location of the poles or otherwise on 
the coefficients of $f(x)$.
In this paper, we prove an analogous result about the $a$-number of the Jacobian ${\rm Jac}(X)$
or, equivalently, about the rank of the Cartier-Manin matrix of $X$.

The $a$-number is an invariant of the $p$-torsion group scheme ${\rm Jac}(X)[p]$.
Specifically, if $\alpha_{p}$ denotes the kernel of Frobenius on the additive group $\mathbb{G}_{a}$, 
then the $a$-number of (the Jacobian of) $X$ is $a_X = {\rm dim}_{k}\text{Hom}(\alpha_{p},\text{Jac}(X)[p])$. 
It equals the dimension of the intersection of ${\rm Ker}(F)$ and ${\rm Ker}(V)$ on the 
Dieudonn\'e module of ${\rm Jac}(X)[p]$, where $V$ is the Verschiebung morphism.
The $a$-number and the Newton polygon place constraints upon each other, 
but do not determine each other, see e.g., \cite{Harashita1, Harashita2}.

The $a$-number is the co-rank of the Cartier-Manin matrix, which is the matrix for the modified Cartier operator on the sheaf of regular 1-forms of $X$. The modified Cartier operator is the $1/p$-linear map ${\mathcal C}: H^{0}(X,\Omega_X^1) \rightarrow H^{0}(X,\Omega_X^1)$ taking exact 1-forms to zero and satisfying ${\mathcal C}(f^{p-1}df) = df$.  In other words, the $a$-number equals the dimension of the kernel of ${\mathcal C}$ on  $H^{0}(X,\Omega_X^1)$.

In this paper, under the condition $p \equiv 1 \bmod L$, 
we prove that the $a$-number of $X$ depends only on the orders of poles of $f(x)$ and not on the 
location of the poles or otherwise on the coefficients of $f(x)$ (see section \ref{Smainresult}).

\begin{theorem} \label{Tintrothm}
Let $X$ be an Artin-Schreier curve with equation $y^p-y=f(x)$, with $f(x) \in k(x)$. 
Suppose $f(x)$ has $m+1$ poles, with orders $d_0, \dots, d_{m}$, and 
let $L={\rm LCM}(d_0, \ldots, d_{m})$. 
If $p \equiv 1 \bmod L$, then the $a$-number of $X$ is 
\[a_X = \sum_{j=0}^{m} a_{j}, \ \text{ where } \ 
a_j = \begin{cases} (p-1)d_j/4 & \text{if $d_j$ even,}\\
  (p-1)(d_j-1)(d_j+1)/4d_j & \text{if $d_j$ odd.} \end{cases}\]
\end{theorem}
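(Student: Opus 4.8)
The plan is to compute $a_X=\dim_k\ker(\mathcal C)=g_X-\mathrm{rank}(\mathcal C)$ by writing down an explicit basis of $H^0(X,\Omega^1_X)$ together with the matrix of $\mathcal C$ in that basis. First I would record the local geometry of the $\ZZ/p$-cover $\pi:X\to\mathbb P^1$: it is totally (wildly) ramified exactly over the $m+1$ poles $P_0,\dots,P_m$ of $f$, with a unique point $Q_j$ above $P_j$. Since $p\nmid d_j$, at $Q_j$ one has $v_{Q_j}(y)=-d_j$ and $v_{Q_j}(dx)=(d_j+1)(p-1)+p\,v_{P_j}(dx)$. Writing a differential as $\omega=\sum_{i=0}^{p-1}y^i h_i(x)\,dx$ with $h_i\in k(x)$, holomorphy of $\omega$ at the $Q_j$ (and at the unramified points, where it forces the $h_i$ to have poles only at the $P_j$) translates into explicit bounds on the pole orders of $h_i$ at each $P_j$ in terms of $i$ and $d_j$. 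Organizing the $h_i$ by their principal parts via partial fractions then yields a basis of $H^0(X,\Omega^1_X)$ indexed by triples (pole $j$, power $i$ of $y$, exponent of the local parameter), whose cardinality I would check equals $g_X=D(p-1)/2$.

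Next I would compute $\mathcal C$ on this basis. Using that $\{1,x,\dots,x^{p-1}\}$ is a $p$-basis of $K=k(x,y)$ over $K^p$ (so that $\mathcal C(z\,dx)=A\,dx$, where $A^p$ is the $x^{p-1}$-coordinate of $z$), together with the identity $y^i=\sum_{\ell=0}^{i}\binom{i}{\ell}(-f)^{i-\ell}y^{p\ell}$ coming from $y^p=y+f$, I obtain
\[\mathcal C\big(y^i h\,dx\big)=\sum_{\ell=0}^{i}\binom{i}{\ell}(-1)^{i-\ell}\,y^{\ell}\,\gamma_{\ell}(x)\,dx,\]
where $\gamma_\ell(x)$ is the $p$-th root of the sum of the $x^{n}$-terms with $n\equiv -1\bmod p$ in $f(x)^{i-\ell}h(x)$. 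Two structural features follow. The operator is lower triangular in the $y$-degree, and within a fixed $y$-degree the $p$-th root lowers pole orders; in particular, for a single pole ($m=0$) it is nilpotent, matching the Deuring--Shafarevich value $s_X=0$. Moreover, the matrix entries are exactly the coefficients of the powers $f^{n}$ at exponents $\equiv-1\bmod p$, so the whole problem is reduced to understanding these coefficients.

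Now I would invoke $p\equiv 1\bmod L$, hence $p\equiv 1\bmod d_j$ for every $j$. The effect is twofold. First, the congruence forces the surviving coefficients to be governed by the top-pole-order term of $f$ at each $P_j$, so that the cross-terms produced by expanding $(\sum_j f_j)^n$ raise pole orders only strictly below the diagonal; this should let me replace $f$ near $P_j$ by the pure term $t_j^{-d_j}$ for the purpose of computing the rank, and thereby \emph{localize} the computation to one independent block per pole. Second, within the block for a pole of order $d$, the congruence makes the relevant square submatrices nonsingular (of Vandermonde/monomial type), so the rank of the block is as large as its triangular support permits and the co-rank is the number of index pairs $(i,a)$ that get annihilated. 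Counting these is a lattice-point count in a triangle cut out by the holomorphy bounds: the triangle closes up exactly when $d$ is even, giving $(p-1)d/4$, whereas for odd $d$ the failure of $d$ to divide the relevant quantity leaves a defect that reduces the count by $(p-1)/(4d)$, yielding $(p-1)(d-1)(d+1)/4d$. Summing the block co-ranks gives $a_X=\sum_j a_j$.

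The main obstacle is the rank computation in this last step: proving that, under $p\equiv1\bmod L$, the coefficient submatrices attached to each pole are nonsingular and that the lower-order and cross-pole contributions cannot increase the rank. Equivalently, I must show the Cartier matrix is conjugate to a block-diagonal matrix, one block per pole, each having exactly the maximal rank compatible with its triangular support. Controlling the coefficients of $f^n$ modulo the prime-to-$p$ congruence, and carrying out the resulting lattice-point count precisely enough to separate the even and odd cases, is where the real work lies; by comparison, the basis construction and the formula for $\mathcal C$ are routine.
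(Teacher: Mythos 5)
Your setup coincides with the paper's: the basis you would derive from holomorphy at the totally ramified points $Q_j$ is exactly the basis $W=\cup_j W_j$ the paper cites from Sullivan, and your formula for $\mathcal{C}(y^i h\,dx)$ via $y^i=(y^p-f)^i$ and the multinomial expansion is the paper's Equation (1). The problem is that everything you defer as ``where the real work lies'' is the entire content of the paper's argument, and your sketch of that step relies on two mechanisms that are not established and, as stated, are not what happens. For the lower bound on the rank you predict a conjugation to a block-diagonal matrix, one block per pole, with nonsingular Vandermonde-type submatrices. No such block structure or determinant computation is available: cross-pole entries genuinely occur in $\mathcal{C}(x_j^by^rdx)$ through the factors $f_i^{\tau_i}(x_i)$ with $i\neq j$. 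The paper's mechanism is different and determinant-free: for each $\omega=x_j^by^rdx$ with $r\geq(b-\epsilon_j)\gamma_j$ (the subset $H_j\subset W_j$, where $\gamma_j=(p-1)/d_j$) it isolates a single \emph{key term} $x_j^by^{r-(b-\epsilon_j)\gamma_j}dx$ in $\mathcal{C}(\omega)$, whose coefficient is a nonzero multinomial coefficient times $u_j^{(b-\epsilon_j)\gamma_j/p}$, with $u_j$ the leading coefficient of $f_j$; the hypothesis $p\equiv 1\bmod d_j$ enters precisely to make $\deg_{x_j}\bigl(x_j^bf_j^{(b-\epsilon_j)\gamma_j}\bigr)=(b-\epsilon_j)p+\epsilon_j$, so this term survives $\mathcal{C}$. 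It then proves, via the constraints $\tau_{-1}=R$ and pole-degree inequalities, that this key term cannot appear in $\mathcal{C}(\omega')$ for any $\omega'\prec\omega$ in a suitable total order; cross-pole contributions are not absent but are shown to force a strictly larger $y$-exponent, which the order absorbs. These pivots give $\mathrm{rank}(M)\geq\sum_j\#H_j$ with no Vandermonde input.

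The second and more serious gap is your description of the co-rank as ``the number of index pairs $(i,a)$ that get annihilated.'' The complementary basis forms $\eta\in A_j=W_j-H_j$ are generally \emph{not} annihilated by $\mathcal{C}$, and $\ker(\mathcal{C})$ is not spanned by basis vectors; so counting annihilated pairs does not bound the rank from above. What must be proved — and is the paper's Lemmas 3.6 through 3.8 — is that for $\eta\in A$ (which forces $r\leq(p-2)/2$) every monomial in $\mathcal{C}(\eta)$ lies on one of the lines $R=r'+\epsilon_i\gamma_i-\gamma_iB$ with $r'\leq r$, that every such monomial is itself the key term of some element of $H$, and then a descent (subtract $\nu\,\mathcal{C}(\omega^*)$ for the largest contributing $\omega^*\in H$, observe the remainder involves only key terms of smaller elements, and iterate) to conclude $\mathcal{C}(\eta)\in\mathrm{span}\{\mathcal{C}(\omega)\mid\omega\in H\}$. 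Without this half you obtain only $a_X\leq\sum_j(\#W_j-\#H_j)$, not equality. Your final lattice-point arithmetic is sound — once $\mathrm{rank}(M)=\sum_j\#H_j$ is known, the intersection of the bounding lines at $b=d_j/2+\epsilon_j-1/2\gamma_j$ yields exactly the stated $a_j$, with the odd-$d_j$ defect $(p-1)/4d_j$ you identify — but the two-sided rank identification it rests on is precisely what your proposal leaves open.
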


To our knowledge, Theorem \ref{Tintrothm} provides the first non-trivial examples of families of Jacobians with constant $a$-number when $p \geq 3$.
When $p=2$, the main result of \cite{ElkinPries} is that the Ekedahl-Oort type (and $a$-number) of an Artin-Schreier curve 
depend only on the orders of the poles of $f(x)$.
For arbitrary $p$, it is easy to construct families of Jacobians with $a_X=0$ (ordinary) or $a_X=1$ (almost ordinary) and  
a family of Jacobians with $a_X=2$ is constructed in \cite[Corollary 4]{GlassPries}.

For fixed $p$, the families in Theorem \ref{Tintrothm} 
occur for every genus $g$ which is a multiple of $(p-1)/2$.
The $a$-number of each curve in the family is roughly half of the genus. 
Using \cite[Theorem 1.1 (2)]{P/Z}, the dimension of the family can be computed to be $\sum_{i=0}^{m}(d_j+1)-3 = 2g/(p-1) - 1$.

Other results about $a$-numbers of curves can be found in \cite{Ekedahl,Elkin}. 
We end the paper with some open questions motivated from this work.

The second author was partially supported by NSF grant DMS-1101712.

%%%%%%%%%%%%%%%%%%%%
\section{Background}
%%%%%%%%%%%%%%%%%%%%

%%%%%%%%%%%%%%%%%%%%%%%%%%%%%%%%%%
\subsection{Artin-Schreier curves} \label{Sartsch}
%%%%%%%%%%%%%%%%%%%%%%%%%%%%%%%%%%

Let $k$ be an algebraically closed field of characteristic $p>0$. 
A {\it curve} in this paper is a smooth projective connected $k$-curve.
An {\it Artin-Schreier curve} is a curve $X$ which admits a $\ZZ/p$-Galois cover of the projective line.
Letting $x$ be a coordinate on the projective line, every Artin-Schreier curve has an equation of the form $y^p-y=f(x)$ for some 
non-constant rational function $f(x)\in k(x)$.
By Artin-Schreier theory, after a change of variables, $f(x)$ can be chosen such that 
$p$ does not divide the order of any pole of $f(x)$.
We assume that this is the case throughout the paper.

Let ${\mathbb B} \subset \mathbb{P}^1(k)$ be the set of poles of $f(x)$ and suppose $\#{\mathbb B}=m+1$.
We can assume that $\infty \in {\mathbb B}$ after a fractional linear transformation.
We choose an ordering of the poles ${\mathbb B}=\{b_0, \ldots, b_m\}$ such that $b_0=\infty$.
For $b_j \in {\mathbb B}$, let $d_j$ be the order of the pole of $f(x)$ at $b_j$.
Let $x-e_j$ be a uniformizer at $b_j$ for $1 \leq j \leq m$.   
Let $x_0 = x$ and let $x_j=(x-e_j)^{-1}$ if $1 \leq j \leq m$. 
The partial fraction decomposition of $f(x)$ has the form:
\[f(x) = f_0(x) + \sum_{j=1}^{m} f_{j}\left(\frac{1}{x-e_j}\right)
=\sum_{j=0}^{m} f_{j}\left(x_j\right),\] where
$f_{j}(x_j) \in k[x_j]$ is a polynomial of degree $d_j$ for $0 \leq j \leq m$ and
$f_j(x)$ has no constant term for $1 \leq j \leq m$.
Let $u_j \in k^{\times}$ be the leading coefficient of $f_{j}(x_j)$. 

%%%%%%%%%%%%%%%%%%%%%%%%%%%%%%%%%%%%%%%%%%%%%%%%%%%%%%%%%%%%%%%%%
\subsection{The genus and $p$-rank of an Artin-Schreier curve}\label{genussection}
%%%%%%%%%%%%%%%%%%%%%%%%%%%%%%%%%%%%%%%%%%%%%%%%%%%%%%%%%%%%%%%%%

The genus of a curve $X$ is the dimension of the vector space $H^0(X,\Omega^1_X)$ of regular 1-forms. 
By the Riemann-Hurwitz formula \cite[Proposition VI.4.1]{Stich}, the genus of an Artin-Schreier curve $X:y^p-y=f(x)$ 
where $f(x)$ has $m+1$ poles with prime-to-$p$ orders $d_0, \ldots, d_j$ as described in Section \ref{Sartsch} is
\[g_X = D(p-1)/2 {\rm \ where \ } D= -2 + \sum_{j=0}^{m}(d_{j}+1).\]

Given a smooth projective $k$-curve $X$ of genus $g$, 
let $\text{Jac}(X)[p]$ denote the $p$-torsion group scheme of the Jacobian of $X$. 
Let $\mu_p$ be the kernel of Frobenius on the multiplicative group $\mathbb{G}_m$. 
The {\it $p$-rank} of $X$ is $s_X = \text{dim}_{\mathbb{F}_p}\text{Hom}(\mu_p,\text{Jac}(X)[p])$.
The number of $p$-torsion points of $\Jac(X)(k)$ satisfies 
$\#\Jac(X)\left[p\right](k) = p^{s_X}$.
The $p$-rank of a curve satisfies the inequality $0 \leq s_X \leq g$.
By a special case of the Deuring-Shafarevich formula, see \cite[Theorem 4.2]{Subrao} or \cite{Crew},
if $X$ is an Artin-Schreier curve with equation $y^p - y = f(x)$ as described above, then the $p$-rank of $X$ is $s_X = m(p-1)$.

%%%%%%%%%%%%%%%%%%%%%%%%%%%
\subsection{The $a$-number}
%%%%%%%%%%%%%%%%%%%%%%%%%%%

Let $\alpha_p$ be the kernel of Frobenius on the additive group $\mathbb{G}_a$.  
The {\it $a$-number} of $X$ is $a_X = \text{dim}_{k}\text{Hom}(\alpha_p,\text{Jac}(X)[p])$.
Equivalently, the $a$-number is the dimension of ${\rm Ker}(F) \cap {\rm Ker}(V)$
on the Dieudonn\'e module of $\Jac(X)[p]$.
The $a$-number also equals the dimension of ${\rm Ker}(V)$ on $H^0(X, \Omega^1_X)$ \cite[5.2.8]{LO}.
By definition, $0 \leq a_X + s_X \leq g$.

The $a$-number is an invariant of the $p$-torsion group scheme ${\rm Jac}(X)[p]$.
In some cases, it gives information about ${\rm Jac}(X)$ as well.
If $a_X=g$, then $\Jac(X)$ is isomorphic to a product of supersingular elliptic curves.
If $s_X < g$, then $a_X > 0$.
This can be used to show that the number of factors appearing
in the decomposition of ${\rm Jac}(X)$ into simple principally polarized abelian varieties is at most $s_X + a_X$.

\begin{remark}
In \cite{Pr:RM}, formulas are given for the $a$-number of an Artin-Schreier curve
when $f(x)$ is a monomial $x^d$ with $p \nmid d.$ If $p \equiv 1 \bmod d$, then the main result of this paper extends \cite[Corollary 3.3]{Pr:RM} to all Artin-Schreier curves $X: y^p-y=f(x)$ having the property that the orders of the poles of $f(x)$ divide $p-1.$ If $p \not\equiv 1 \bmod d$, let $h_b \in [0,p-1]$ be the integer such that $h_b \equiv (-1-b)d^{-1} \bmod p$. By \cite[Remark 3.4]{Pr:RM}, the $a$-number of $X:y^p-y=x^d$ is given by \[a_X = \sum_{b=0}^{d-2} \text{min}\left(h_b,p-\left\lceil (p+1+bp)/d\right\rceil\right).\]
\end{remark}

%%%%%%%%%%%%%%%%%%%%%%%%%%%%%%%%%%%%%%%%%%%%%%%%%%%%
\subsection{The Cartier operator and the $a$-number}
%%%%%%%%%%%%%%%%%%%%%%%%%%%%%%%%%%%%%%%%%%%%%%%%%%%%

The (modified) Cartier operator $\mathcal{C}$ is the semi-linear map 
${\mathcal{C}}: H^0(X,\Omega^1_X) \rightarrow H^0(X,\Omega^1_X)$ with the following properties: ${\mathcal{C}}(\omega_1 + \omega_2) = {\mathcal{C}}(\omega_1) + {\mathcal{C}}(\omega_2)$;
${\mathcal{C}}(f^p\omega) = f{\mathcal{C}}(\omega)$; and
\[{\mathcal{C}}(f^{n-1}df) = 
   \begin{cases}
    df& \text{if $n=p$},\\
    0&  \text{if $1 \leq n < p$}.
   \end{cases}\]
Suppose $\beta = \{\omega_1,\ldots,\omega_g\}$ is a basis for $H^0(X,\Omega^1_X)$. 
For each $\omega_j$, let $m_{i,j} \in k$ be such that \[{\mathcal{C}}\left(\omega_j\right) = \sum_{i=1}^g m_{i,j}\:\omega_i.\] 
The $g \times g$-matrix $M=(m_{i,j})$ is the (modified) Cartier-Manin matrix 
and it gives the action of the (modified) Cartier operator. 
The Cartier-Manin matrix is $\tilde{M}=(m_{i,j}^p)$; it is the matrix for the (unmodified) Cartier operator, see \cite{Yui}.
The action of $V$ is the same as the action of the (unmodified) Cartier operator on $H^0(X, \Omega^1)$, see \cite{Cartier}, 
and so the $a$-number satisfies $a_X = g_X - {\rm rank}(\tilde{M}) =g_X- {\rm rank}(M)$.
At the risk of confusion, we drop the word modified in the rest of the paper. 
 
%%%%%%%%%%%%%%%%%%%%%%%%%%%%%%%%%%%%%%%%%%%%%%%%%%%
\section{The $a$-number of a family of Artin-Schreier curves} \label{MyANumber}
%%%%%%%%%%%%%%%%%%%%%%%%%%%%%%%%%%%%%%%%%%%%%%%%%%%

%%%%%%%%%%%%%%%%%%%%%%%%%%%%%%%%%%%%%%%%%%%%%%%%%%%%
\subsection{Regular 1-forms on an Artin-Schreier curve}
%%%%%%%%%%%%%%%%%%%%%%%%%%%%%%%%%%%%%%%%%%%%%%%%%%%%

Let $X$ be an Artin-Schreier curve as described in Section \ref{Sartsch}. By \cite[Lemma 1]{Sullivan}, 
a basis for $H^{0}(X,\Omega_X^1)$ is given by $W=\cup_{j=0}^{r} W_j$ where
\begin{align}
&W_{0} = \left\{x^{b}y^{r}dx \big| r,b\geq0 \text{ and } rd_{0}+bp \leq (p-1)(d_{0}-1)-2\right\}, \text{ and } \nonumber \\
&W_{j} = \left\{x_{j}^{b}y^{r}dx \big| r\geq0, b\geq 1, \text{ and } rd_{j}+bp \leq (p-1)(d_j+1)\right\} \text{ if $1 \leq j \leq m$}. \nonumber
\end{align}

There is a slight difference between the cases $j=0$ and $1 \leq j \leq m$.  
This is in some way unavoidable as can be seen from the formula for the $p$-rank. 
To shorten the exposition, we let $\epsilon_j=-1$ if $j=0$ and $\epsilon_j=1$ if $1 \leq j \leq m$.
Note that $\# W_j = (d_j+\epsilon_j)(p-1)/2.$

We define an ordering $\prec$ on the basis $W$. Define $x_{i}^{b_1}y^{r_1}dx \prec x_{j}^{b_2}y^{r_2}dx$ if $r_1 < r_2$, or if
$r_1 = r_2$ and $i < j$, or if $r_1 = r_2$, $i=j$ and $b_1 < b_2$.

%%%%%%%%%%%%%%%%%%%%%%%%%%%%%%%%%%%%%%%%%%%%%%%%%%%%
\subsection{Action of the Cartier operator}
%%%%%%%%%%%%%%%%%%%%%%%%%%%%%%%%%%%%%%%%%%%%%%%%%%%%

Consider the action of the Cartier operator on $H^0(X, \Omega^1_X)$.
In general,
\begin{align}
{\mathcal C}\left(x_{j}^{b}y^{r}dx\right) &= {\mathcal C}\left(x_{j}^{b}\left(y^p - f(x)\right)^{r}dx\right). \nonumber
\end{align}

To simplify notation, let $\tau=(\tau_{-1},\dots,\tau_{m})$ denote a tuple of length $m+2$
whose entries are non-negative integers and let $|\tau|=\sum_{j=-1}^{m} \tau_j$.
Using the extended binomial theorem, we see that
\[\left(y^p - f(x)\right)^{r} = 
\sum_{\tau, |\tau|=r} c_{\tau} y^{p{\tau_{-1}}} f_{0}^{\tau_0}(x) f_{1}^{\tau_1}\left(x_{1}\right) \cdots f_{m}^{\tau_m}\left(x_m\right),\] 
where  
\[\displaystyle c_{\tau} = (-1)^{r-\tau_{-1}}\binom{r}{\tau_{-1},\dots,\tau_m}.\]
So,
\begin{equation} \label{Ecartier}
{\mathcal C}\left(x_{j}^{b}y^{r}dx\right)
= \sum_{\tau, |\tau|=r} c_{\tau} y^{\tau_{-1}} {\mathcal C}\left(x_{j}^{b} f_{0}^{\tau_0}(x) f_{1}^{\tau_1}\left(x_{1}\right) \cdots f_{m}^{\tau_m}\left(x_{m} \right) dx \right).
\end{equation}
One can check that 
\begin{equation}\label{E2}
{\mathcal C}\left(x_j^{ap+\epsilon_j}dx\right) = x_j^{a+\epsilon_j}dx.
\end{equation}

%%%%%%%%%%%%%%%%%%%%%%%%%%%%%%%%%%%%%%%%%%%%%%%%%%%%
\subsection{An assumption on the orders of the poles} \label{Spole}
%%%%%%%%%%%%%%%%%%%%%%%%%%%%%%%%%%%%%%%%%%%%%%%%%%%%

Let $L={\rm LCM}(d_0, \ldots, d_m)$.
From now on, we assume that $p \equiv 1 \bmod L$; 
in other words, the order $d_j$ of the $j$th pole of $f(x)$ divides $p-1$
and we define $\gamma_j = (p-1)/d_j$ for $0 \leq j \leq m$.
Under this condition, we prove a result about the $a$-number of the Jacobian of $X$ which is analogous to the following result of Zhu:

\begin{remark} \label{Rzhu}
Suppose $f(x) \in \FF_q(x)$ for some power $q$ of $p$ and let $N_s=\#X(\FF_{q^s})$ for $s \in {\mathbb N}$.
Since $X$ is a smooth projective curve, the zeta function of $X$ is a rational function of the form:
\[Z_X(u):={\rm exp}(\sum_{s=1}^\infty \frac{N_su^s}{s}) = \frac{L_X(u)}{(1-u)(1-qu)},\]
where the $L$-function $L_X(u) \in \ZZ[u]$ is a polynomial of degree $2g$.
Under the condition $p \equiv 1 \bmod L$, Zhu proved that 
the Newton polygon of $L_X(u)$
(shrunk by a factor of $p-1$ in the horizontal and vertical direction) equals the Hodge polygon of $f(x)$ \cite[Corollary 1.3]{Zhu:newtonoverhodge}.  
The Hodge polygon has slopes of $0$ and $1$ each occurring with multiplicity $m$ and 
slopes $\left\{1/d_j, \ldots, (d_j-1)/d_j\right\}$ for $0 \leq j \leq m$.
In particular, this means that the Newton polygon depends only on the
orders of the poles of $f(x)$ and not on the location of the poles or otherwise on 
the coefficients of $f(x)$.
\end{remark}

Under the condition $p \equiv 1 \bmod L$, for $0 \leq j \leq m$, the 1-forms $x_{j}^{b}y^{r}dx \in W_j$ are in bijection 
with ordered pairs $(b,r)$ of integers in the closed triangle bounded by $r=0$, $b=(1+\epsilon_j)/2$, and $r=(p-2+ \epsilon_j \gamma_j)-\gamma_{j}b$.

%%%%%%%%%%%%%%%%%%%%%%%%%%%%%%%%%%%%%%%%%%%%%%%%%%%%
\subsection{Linearly independent columns of the Cartier-Manin matrix}
%%%%%%%%%%%%%%%%%%%%%%%%%%%%%%%%%%%%%%%%%%%%%%%%%%%%

In this section, we define a subset $H \subset W$ and show that the columns of the 
Cartier-Manin matrix associated with elements of $H$ are linearly independent.
This gives a lower bound on the rank of the Cartier-Manin matrix, and thus an upper bound on the 
$a$-number.

Recall that $\epsilon_j=-1$ if $j=0$ and $\epsilon_j=1$ if $1 \leq j \leq m$.
We partition the 1-forms in $W_j$ into two subsets:
\[H_{j} = \left\{ x_j^by^rdx \in W_{j} \  \Big| \ r \geq (b-\epsilon_j)\gamma_{j} \right\},\]
and the set-theoretic complement
\[A_{j} = W_{j} - H_{j}.\]

\begin{figure}[ht]
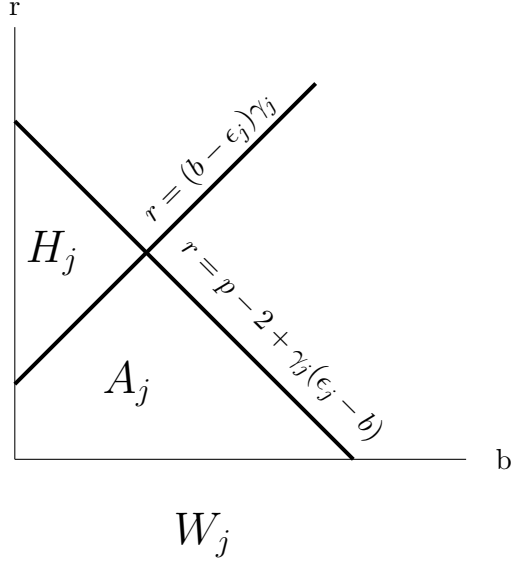

\begin{center}
\begin{pgfpicture}{-1cm}{-1cm}{6cm}{6.5cm}
% x,y axes
\pgfline{\pgfxy(0,0)}{\pgfxy(6,0)}
\pgfline{\pgfxy(0,0)}{\pgfxy(0,5.75)}
% diagonal line going down
\begin{pgfscope}
\pgfsetlinewidth{1.5pt}
\pgfline{\pgfxy(0,4.5)}{\pgfxy(4.5,0)}
\end{pgfscope}
\pgfputlabelrotated{.74}{\pgfxy(0,4.7)}{\pgfxy(4.5,0.2)}{6pt}{\pgfbox[center,base]
{$r=p-2+\gamma_{j}(\epsilon_j-b)$}}
% diagonal line going up
\begin{pgfscope}
\pgfsetlinewidth{1.5pt}
\pgfline{\pgfxy(0,1)}{\pgfxy(4,5)}
\end{pgfscope}
\pgfputlabelrotated{0.7}{\pgfxy(0,1)}{\pgfxy(4,5)}{6pt}{\pgfbox[center,base] 
{$r=(b-\epsilon_j)\gamma_{j}$}}
\pgfputat{\pgfxy(0.5,2.75)}{\pgfbox[center,center]{\LARGE $H_j$}}
\pgfputat{\pgfxy(1.5,1)}{\pgfbox[center,center]{\LARGE $A_j$}}
\pgfputat{\pgfxy(2.5,-1)}{\pgfbox[center,center]{\LARGE $W_j$}}
\pgfputat{\pgfxy(6.5,0)}{\pgfbox[center,center]{b}}
\pgfputat{\pgfxy(0,6)}{\pgfbox[center,center]{r}}
\end{pgfpicture}
\end{center}

\caption{The subsets $H_{j}$ and $A_{j}$ of $W_j.$}
\label{DiffsPic}

\end{figure}

Let $H = \cup_{j=0}^{m} H_j$ and $A = \cup_{j=0}^{m} A_j.$

\begin{definition} \label{Dkeyterm}
If $\omega = x_j^by^rdx \in H_j$, the {\it key term} $\kappa({\mathcal C}(\omega))$ of ${\mathcal C}(\omega)$ is
the $1$-form $x_j^{b}y^{r-(b-\epsilon_j)\gamma_{j}}dx$.
\end{definition}

%%%%%%%%%%%%%%%%%%%%%%%%%%%%%%%%%%%%%%%%%%%%%%%%%%%%%%%%%%%%%%%%%%%%%%%%%%%%%%%%%%%%%%%%%%%%%%%%%%%%%%%%%%%%%%%%%%%%%%%%%%%
\begin{lemma} \label{GeneralClaim1}
If $\omega \in H$, the coefficient of $\kappa({\mathcal C}(\omega))$ is non-zero
in ${\mathcal C}(\omega)$.
\end{lemma}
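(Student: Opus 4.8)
The plan is to isolate the single term in the expansion \eqref{Ecartier} that produces the key term and to show that every other term contributes zero to it. Write $\omega = x_j^b y^r dx \in H_j$ and set $s = (b-\epsilon_j)\gamma_j$, so that $\kappa(\mathcal{C}(\omega)) = x_j^b y^{r-s}dx$, and note that $\omega \in H_j$ means precisely $r \geq s \geq 0$. In each summand of \eqref{Ecartier} the power of $y$ is exactly $\tau_{-1}$; hence only tuples $\tau$ with $\tau_{-1} = r - s$, equivalently with $\sum_{i=0}^m \tau_i = s$, can contribute to the key term. For each such $\tau$ it then suffices to compute the coefficient of $x_j^b dx$ in $\mathcal{C}\!\left(x_j^b f_0^{\tau_0}(x)\cdots f_m^{\tau_m}(x_m)\,dx\right)$ and to weight the result by $c_\tau$.

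The next step is a local computation at the pole $b_j$. Put $G_\tau = x_j^b \prod_{i=0}^m f_i^{\tau_i}$. By \eqref{E2}, $\mathcal{C}$ sends $x_j^n dx$ to a nonzero multiple of $x_j^b dx$ exactly when $n = (b-\epsilon_j)p + \epsilon_j$, so the coefficient of $x_j^b dx$ in $\mathcal{C}(G_\tau\,dx)$ is the $p$-th root of the coefficient of $x_j^{(b-\epsilon_j)p+\epsilon_j}$ in the partial-fraction part of $G_\tau$ at $b_j$. I would then bound the $x_j$-degree of that part: for $i \neq j$ the factor $f_i^{\tau_i}(x_i)$ is regular at $b_j$ (when $1 \le i \le m$), while in the case $j=0$ each $f_i(x_i)$ with $i\ge 1$ vanishes at $\infty$; in either situation these cross-factors contribute nonpositive $x_j$-degree at $b_j$. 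Consequently the relevant $x_j$-degree is at most $\deg_{x_j}\!\left(x_j^b f_j^{\tau_j}\right) = b + d_j\tau_j \le b + d_j s = (b-\epsilon_j)p + \epsilon_j$, with equality forcing $\tau_j = s$ and hence $\tau_i = 0$ for all $i \neq j$ with $i \ge 0$; that is, $\tau = \tau^\ast$, the tuple with $\tau^\ast_{-1} = r-s$, $\tau^\ast_j = s$, and all other entries $0$. For $\tau^\ast$ the coefficient in question is the leading coefficient $u_j^{\,s}$ of $x_j^b f_j^{\,s}(x_j)$, which is nonzero.

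It remains to check that the scalar $c_{\tau^\ast}$ does not vanish in $k$. Since $r - \tau^\ast_{-1} = s$, we have $c_{\tau^\ast} = (-1)^s \binom{r}{s}$, so the coefficient of the key term equals $(-1)^s\binom{r}{s}\,(u_j^{\,s})^{1/p}$. Because $\omega \in W_j$, the defining inequality of $W_j$ together with $b \ge (1+\epsilon_j)/2$ forces $r \le p-2$; since $0 \le s \le r \le p-2 < p$, the integer $\binom{r}{s}$ is not divisible by $p$, and $u_j \neq 0$ gives $(u_j^{\,s})^{1/p}\neq 0$. Hence the coefficient of $\kappa(\mathcal{C}(\omega))$ is nonzero, as claimed.

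The main obstacle is the local degree analysis at $b_j$: one must verify cleanly that the coefficient of $x_j^b dx$ in the output of $\mathcal{C}$ is governed solely by the partial-fraction part of $G_\tau$ at $b_j$, and that the cross-factors $f_i^{\tau_i}(x_i)$ with $i \neq j$ can only lower the relevant $x_j$-degree rather than raise it. The pole at infinity ($j=0$, $\epsilon_0=-1$) and the finite poles ($1 \le j \le m$, $\epsilon_j = 1$) must be treated in parallel, which is exactly what the $\epsilon_j$ bookkeeping streamlines. Once the degree bound pinning down $\tau^\ast$ is in place, the remaining binomial-coefficient argument is immediate, since the bound $r \le p-2$ keeps $r$, $s$, and $r-s$ all below $p$.
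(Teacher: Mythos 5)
Your proposal is correct and follows essentially the same route as the paper: you isolate the unique tuple $\tau^\ast$ with $\tau_{-1}=r-s$, $\tau_j=s=(b-\epsilon_j)\gamma_j$, use the $x_j$-degree bound $b+\tau_j d_j \leq (b-\epsilon_j)p+\epsilon_j$ to rule out all other tuples, and observe that $c_{\tau^\ast}=(-1)^s\binom{r}{s}$ and $u_j^{\,s}$ are nonzero since $r\leq p-2$. Your treatment is in fact slightly more explicit than the paper's (the partial-fraction justification for why cross-factors $f_i^{\tau_i}(x_i)$, $i\neq j$, cannot raise the $x_j$-degree is only implicit there), but the argument is the same.
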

%%%%%%%%%%%%%%%%%%%%%%%%%%%%%%%%%%%%%%%%%%%%%%%%%%%%%%%%%%%%%%%%%%%%%%%%%%%%%%%%%%%%%%%%%%%%%%%%%%%%%%%%%%%%%%%%%%%%%%%%%%%

\begin{proof}
Suppose $\omega \in H_j$ for some $0 \leq j \leq m$.  
The claim is that, if $r \geq (b-\epsilon_j)\gamma_{j}$, then the coefficient of the 1-form $x_{j}^{b}y^{r-(b-\epsilon_j)\gamma_j}dx$ in ${\mathcal C}(x_{j}^{b}y^{r}dx)$ is non-zero.
Consider the tuple $\tau$ given by $\tau_{-1}=r-(b-\epsilon_j)\gamma_j$, $\tau_j = (b-\epsilon_j)\gamma_j$, and $\tau_i = 0$ for all $i \not\in \{-1,j\}$.
If $r \geq (b-\epsilon_j)\gamma_j$, by Equation \eqref{Ecartier}, the following term appears in ${\mathcal C}(x_{j}^{b}y^{r}dx)$:
\begin{equation} \label{E3}
c_{\tau}y^{r-(b-\epsilon_j)\gamma_j}{\mathcal C}\left(x_{j}^{b}f_{j}^{(b-\epsilon_j)\gamma_j} \left(x_j\right) dx \right).
\end{equation}
Because $\text{deg}_{x_j}(x_j^bf_{j}^{(b-\epsilon_j)\gamma_{j}}(x_j)) = (b-\epsilon_j)p + \epsilon_j$, we see from \eqref{E2} that $c_{\tau} u_{j}^{(b-\epsilon_j)\gamma_j/p} x_{j}^{b}y^{r-(b-\epsilon_j)\gamma_j}dx$ appears in \eqref{E3}.

The coefficient $c_{\tau}$ in Equation \eqref{E3} is nonzero because $r \leq p-2$ for all $\omega \in H$. Also, $u_{j} \neq 0$ as it is the leading coefficient of $f_j(x_j)$. 
This term is canceled by no others. To see this, notice that the coefficient of $x_{j}^{b}y^{r-(b-\epsilon_j)\gamma_j}dx$ in Equation \eqref{Ecartier}
is zero unless $\tau_{-1} = r - (b-\epsilon_j)\gamma_{j}$ and $\tau_{j} \geq (b-\epsilon_j)\gamma_{j}$.
\end{proof}

The next lemma shows that the coefficient of $\kappa({\mathcal C}(\omega))$ is zero in ${\mathcal C}(\omega')$
for any 1-form $\omega' \in W$ which is smaller than $\omega$. 

%%%%%%%%%%%%%%%%%%%%%%%%%%%%%%%%%%%%%%%%%%%%%%%%%%%%%%%%%%%%%%%%%%%%%%%%%%%%%%%%%%%%%%%%%%%%%%%%%%%%%%%%%%%%%%%%%%%%%%%%%%%
\begin{lemma} \label{GeneralClaim3}
If $\omega \in H$ and $\omega' \in W$ with $\omega' \prec \omega$, 
then the coefficient of $\kappa({\mathcal C}(\omega))$ is zero in ${\mathcal C}(\omega')$.
\end{lemma}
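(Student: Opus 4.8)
The plan is to trace which summands of the expansion \eqref{Ecartier} of $\mathcal C(\omega')$ could conceivably produce the key term, and then to kill all of them with a single pole-order count. Fix $\omega=x_j^by^rdx\in H_j$ and $\omega'=x_i^{b'}y^sdx\in W$ with $\omega'\prec\omega$, and write $r'=r-(b-\epsilon_j)\gamma_j$, so that $\kappa(\mathcal C(\omega))=x_j^by^{r'}dx$ by Definition \ref{Dkeyterm}. Since each summand of \eqref{Ecartier} pulls the $y$-power $\tau_{-1}$ out of the Cartier operator, the key term can arise only from tuples $\tau$ with $|\tau|=s$ and $\tau_{-1}=r'$. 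For such a $\tau$, the coefficient of $x_j^bdx$ in $\mathcal C\bigl(x_i^{b'}\prod_\ell f_\ell^{\tau_\ell}(x_\ell)\,dx\bigr)$ is, by \eqref{E2}, controlled by the coefficient of $x_j^{(b-\epsilon_j)p+\epsilon_j}$ in the expansion at the pole $b_j$ of $P_\tau:=x_i^{b'}\prod_{\ell=0}^m f_\ell^{\tau_\ell}(x_\ell)$.

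First I would compute the order of the pole of $P_\tau$ at $b_j$. Because $f_\ell(x_\ell)$ is regular at $b_j$ for $\ell\neq j$ (and, when $j=0$, the finite-pole factors $f_\ell(x_\ell)$ with $\ell\ge1$ even vanish at $\infty$ since they have no constant term), while $x_i^{b'}$ raises this pole only when $i=j$, the order equals $d_j\tau_j+b'\delta_{i,j}$, where $\delta_{i,j}$ is the Kronecker delta. Next I would bound $\tau_j$: the conditions $\tau_{-1}=r'$ and $|\tau|=s$ give $\tau_j\le\sum_{\ell\ge0}\tau_\ell=s-r'$, and since $\omega'\prec\omega$ forces $s\le r$ we obtain $\tau_j\le r-r'=(b-\epsilon_j)\gamma_j$, hence $d_j\tau_j\le(b-\epsilon_j)(p-1)$. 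Comparing the pole order with the exponent $(b-\epsilon_j)p+\epsilon_j=(b-\epsilon_j)(p-1)+b$ required to feed the key term through \eqref{E2}, the key term can occur only if $d_j\tau_j+b'\delta_{i,j}\ge(b-\epsilon_j)(p-1)+b$, that is, only if $b'\delta_{i,j}\ge b$.

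It remains to exclude this in each case of $\prec$, and this is the delicate part: Lemma \ref{GeneralClaim1} shows the bound is attained with equality when $\omega'=\omega$ (the "pure" tuple $\tau_j=(b-\epsilon_j)\gamma_j$), so the entire argument lives in extracting a strict inequality from $\omega'\prec\omega$. When $s=r$ and $i<j$ one has $\delta_{i,j}=0$ and $b\ge1$, so $b'\delta_{i,j}=0<b$; when $s=r$, $i=j$, $b'<b$ one has $b'\delta_{i,j}=b'<b$; either way the key term is absent. The case $s<r$ is the main obstacle, since then $i$ and $b'$ are unconstrained by $\prec$ and one could a priori have $i=j$ with $b'\ge b$. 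Here I would sharpen the count: $s<r$ makes $\tau_j\le(b-\epsilon_j)\gamma_j$ strict, so $d_j\tau_j\le(b-\epsilon_j)(p-1)-d_j$ and the key term now requires $b'\delta_{i,j}\ge b+d_j$, which fails because membership $\omega'\in W_i$ forces $b'\le d_i$ (with strict inequality when $i=0$). Finally I would note that the boundary value $b=0$, possible only for $j=0$, causes no trouble: the subcases $s=r$ are then vacuous ($i<0$ and $b'<0$ are impossible), and $s<r$ is covered by the sharpened bound.
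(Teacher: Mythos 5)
Your proof is correct and takes essentially the same approach as the paper's: pin down $\tau_{-1}$ to match the $y$-exponent of the key term, bound $\tau_j$ by $s-r'$, and compare the pole order of the rational function at $b_j$ with the exponent $(b-\epsilon_j)p+\epsilon_j$ demanded by \eqref{E2}. The paper merely organizes the same degree count as four cases ($j=0$ or not, $i=j$ or not) yielding inequalities of the form $R-r\geq (b\pm B)/d_j$, where you unify them via $\delta_{i,j}$ and replace the paper's use of positivity of those rational bounds with the integrality sharpening of the $\tau_j$ bound in the $s<r$ case.
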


\begin{proof}
Write $\omega'=x_k^By^{R}dx$ and recall the calculation: 
\begin{equation} \label{Ecartier2}
{\mathcal C}\left(x_k^By^{R}dx\right) = \sum_{\tau, |\tau| = R} c_{\tau} y^{\tau_{-1}} {\mathcal C}\left(x_k^B f_{0}^{\tau_0}(x) f_{1}^{\tau_1}(x_1) \cdots f_{m}^{\tau_m}(x_{m})\right).
\end{equation}

{\bf Case 1:}  Suppose $\omega = x^{b}y^{r} dx \in H_{0}$. The claim is that the coefficient $c_{\omega}$ of $\kappa({\mathcal C}(\omega))=x^{b}y^{r-(b+1)\gamma_{0}}dx$ in Equation \eqref{Ecartier2} is zero for any $\omega' \prec x^by^rdx$. The coefficient $c_{\omega}$ will be zero unless $\tau_{-1} = r-(b+1)\gamma_{0}$. This gives the restriction that $\tau_0 \leq R-(r-(b+1)\gamma_{0})$.

If $k=0$, $c_{\omega}$ will be zero unless $\tau_{0}d_{0} + B \geq (b+1)p - 1$. 
Combining these inequalities yields that 
\[R-r \geq (b-B)/d_{0}.\]
Because both $b$ and $B$ are less than $d_{0}-2$, $c_{\omega}$ is non-zero only if $R > r$ or if $R=r$ and $B \geq b$. 

If $k \neq 0$, the coefficient $c_{\omega}$ of $x^{b}y^{r-(b+1)\gamma_{0}}dx$ in Equation \eqref{Ecartier2} will be zero 
unless $\tau_{0}d_{0} - B \geq (b+1)p-1$.
Combining the given inequalities shows that \[R-r \geq (b+B)/d_{0}.\]
As $B>0$, this shows that $c_{\omega}$ is non-zero only if $R>r$. 
In both cases, $\omega'=x_k^By^Rdx \not \prec \omega=x^by^rdx$.

{\bf Case 2:} Suppose $\omega \in H_j$ for some $1 \leq j \leq m$. 
The claim is that the coefficient $c_{\omega}$ of the 1-form $x_{j}^{b}y^{r-(b-1)\gamma_{j}}dx$ in ${\mathcal C}(\omega')$ is zero for any $\omega' \prec x_{j}^{b}y^rdx$.
The coefficient $c_{\omega}$ is non-zero only if $\tau_{-1} = r-(b-1)\gamma_j$.  This gives the restriction that $\tau_j \leq R-(r-(b-1)\gamma_{j})$. 

If $k \neq j$, then $c_{\omega}$ is non-zero only if $\tau_{j}d_{j} \geq (b-1)p+1$ and so
\[R-r \geq b/d_{j}.\]
As $b>0$, $c_{\omega}$ is non-zero only if $R>r$.

If $k =j$, the coefficient $c_{\omega}$ is non-zero only if $\tau_{j}d_{j}+B \geq (b-1)p+1$ which yields that
\[R - r \geq (b-B)/d_j.\]
Since $b$ and $B$ are both bounded by $d_j$, this is only satisfied if $R>r$ or if $R=r$ and $B \geq b$, 
in other words, only if $\omega'=x_k^By^Rdx \not \prec \omega=x^by^rdx$.
\end{proof}

\begin{proposition} \label{Pindep}
The columns of the Cartier-Manin matrix $M$ corresponding to the 1-forms in $H$ are linearly 
independent.
\end{proposition}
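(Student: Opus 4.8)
The plan is to exploit the triangular structure furnished by Lemmas \ref{GeneralClaim1} and \ref{GeneralClaim3} together with the total order $\prec$ on $W$. Recall that the column of $M$ indexed by $\omega$ is exactly the coordinate vector of ${\mathcal C}(\omega)$ in the basis $W$; since taking coordinates is a $k$-linear isomorphism $H^0(X,\Omega^1_X) \to k^g$, the claim that these columns are linearly independent is equivalent to the claim that the $1$-forms $\{{\mathcal C}(\omega) : \omega \in H\}$ are linearly independent over $k$. (The semi-linearity of ${\mathcal C}$ plays no role here, since we only compare coordinate vectors.) I would therefore prove this equivalent statement.

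I would argue by contradiction. Suppose there are constants $c_\omega \in k$, not all zero, with $\sum_{\omega \in H} c_\omega {\mathcal C}(\omega) = 0$, and set $S = \{\omega \in H : c_\omega \neq 0\}$, a nonempty finite set. Since $\prec$ is a total order on the basis $W$ (each basis $1$-form $x_j^b y^r dx$ corresponds to the triple $(r,j,b)$, and $\prec$ is the lexicographic order on these triples), the set $S$ has a unique $\prec$-maximal element $\omega^\ast$. The object to track is the single basis $1$-form $\kappa({\mathcal C}(\omega^\ast))$, whose coefficient in the vanishing sum I would now compute.

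The coefficient of $\kappa({\mathcal C}(\omega^\ast))$ in $\sum_{\omega \in S} c_\omega {\mathcal C}(\omega)$ decomposes as a sum of contributions, one from each $\omega \in S$. By Lemma \ref{GeneralClaim1}, its coefficient in ${\mathcal C}(\omega^\ast)$ is some nonzero $\lambda \in k$. For every other $\omega \in S$ we have $\omega \prec \omega^\ast$ by maximality, so Lemma \ref{GeneralClaim3}, applied with $\omega^\ast$ in the role of $\omega$ and with $\omega$ in the role of $\omega'$, shows that the coefficient of $\kappa({\mathcal C}(\omega^\ast))$ in ${\mathcal C}(\omega)$ is zero. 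Hence the total coefficient equals $c_{\omega^\ast}\lambda \neq 0$. But the sum is the zero $1$-form, so this coefficient must vanish, a contradiction. Therefore all $c_\omega = 0$, and the columns of $M$ indexed by $H$ are linearly independent.

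The only genuine obstacle has already been discharged by the two preceding lemmas: the content is precisely that $\kappa({\mathcal C}(\omega^\ast))$ is seen with nonzero coefficient by $\omega^\ast$ itself but by none of the strictly smaller basis elements, which is the pairing of Lemma \ref{GeneralClaim1} with Lemma \ref{GeneralClaim3}. I would emphasize that one does not even need the assignment $\omega \mapsto \kappa({\mathcal C}(\omega))$ to be injective on $H$; choosing the $\prec$-maximal element of the support $S$ is what guarantees that no competing contributions to the coefficient of $\kappa({\mathcal C}(\omega^\ast))$ survive.
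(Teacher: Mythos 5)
Your proof is correct and is exactly the argument the paper intends: the paper's proof of Proposition \ref{Pindep} simply cites Lemmas \ref{GeneralClaim1} and \ref{GeneralClaim3} and notes that the key terms $\kappa({\mathcal C}(\omega))$ yield pivots of $M$, which is the triangularity you make explicit via the $\prec$-maximal element of the support. Your write-up just fills in the details of that pivot argument (including the harmless reduction from columns of $M$ to the $1$-forms ${\mathcal C}(\omega)$ themselves), so there is nothing to correct.
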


\begin{proof}
This follows from Lemmas \ref{GeneralClaim1} and \ref{GeneralClaim3} since the key terms $\kappa({\mathcal C}(\omega))$ yield pivots of $M$ for $\omega \in H$.  
\end{proof}

%%%%%%%%%%%%%%%%%%%%%%%%%%%%%%%%%%%%%%%%%%%%%%%%%%%%
\subsection{Linearly dependent columns of the Cartier-Manin matrix}
%%%%%%%%%%%%%%%%%%%%%%%%%%%%%%%%%%%%%%%%%%%%%%%%%%%%

In this section, we prove that the columns of the Cartier-Manin matrix associated with the 1-forms in $A$ do not contribute to the rank of the Cartier-Manin matrix, because they are linearly dependent on the columns associated with the 1-forms in $H$. 

For fixed $j$ and $r$, let $B$ vary and consider the ordered pair $(B,R)$ of exponents in $\kappa({\mathcal C}(x_{j}^{B}y^{r}dx)$.
The points $(B,R)$ lie on a line of slope $-\gamma_j$,
specifically the line $R=r+\epsilon_j \gamma_j - \gamma_j B$,
where $\epsilon_{j}=-1$ if $j=0$ and $\epsilon_{j}=1$ if $1 \leq j \leq m$.
For $0 \leq j \leq m$ and $r \leq (p-2)/2$, let 
\[Z_{j,r} = \left\{ x_j^By^Rdx \in W_j \ \Big| \ R=r + \epsilon_j \gamma_{j}-\gamma_{j}B \right\}.\]
Note that $Z_{0,r}$ is empty if $0 \leq r < \gamma_{0}.$ Let
\[Y_{j,r} = \begin{cases}  
                  \cup_{\ell=\gamma_{0}}^{r} Z_{0,\ell} & \text{if $j=0$,} \\
                  \cup_{\ell=0}^{r} Z_{j,\ell} & \text{if $1 \leq j \leq m$.} 
                  \end{cases}\]

%%%%%%%%%%%%%%%%%%%%%%%%%%%%%%%%%%%%%%%%%%%%%%%%%%%%%%%%%%%%%%%%%%%%%%%%%%%%%%%%%%%%%%%%%%%%%%%%%%%%%%%%%%%%%%%%%%%%%%%%%%%%
\begin{lemma}\label{GeneralClaim4}
Suppose $\eta = x_{j}^{b}y^{r}dx \in W_j$ for some $0 \leq j \leq m$ with $r \leq (p-2)/2$.
Then ${\mathcal C}(\eta) \in \mathrm{span}(Y_{i,r} \mid 0 \leq i \leq m)$. 
\end{lemma}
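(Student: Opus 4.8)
The plan is to expand $\mathcal{C}(\eta)$ via Equation \eqref{Ecartier} and to control, pole by pole, the exponents of the monomials that can appear. Writing $\omega_\tau = x_j^b f_0^{\tau_0}(x)\cdots f_m^{\tau_m}(x_m)\,dx$, we have $\mathcal{C}(\eta)=\sum_{|\tau|=r}c_\tau\, y^{\tau_{-1}}\,\mathcal{C}(\omega_\tau)$, so every monomial occurring in $\mathcal{C}(\eta)$ has the shape $x_i^B y^{\tau_{-1}}dx$ for some pole index $i$, some exponent $B$, and some $\tau$ with $|\tau|=r$. For such a monomial lying in $W_i$, membership in $Y_{i,r}$ is exactly the statement that its $Z$-index $\tau_{-1}+\gamma_i(B-\epsilon_i)$ is at most $r$ (the lower bound defining $Y_{i,r}$ holds automatically since $\tau_{-1}\ge 0$ and $B\ge (1+\epsilon_i)/2$). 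Because $r\le (p-2)/2<p$, every $\tau_{-1}\le r$ is less than $p$, so the monomials $x_i^B y^{R}dx$ that occur are linearly independent (distinct poles give independent partial fractions, and $y^0,\dots,y^{p-1}$ are independent over $k(x)$). Hence it suffices to prove the bound term by term, namely $\gamma_i(B-\epsilon_i)\le \sum_{i'\ge 0}\tau_{i'}=r-\tau_{-1}$ for every power $x_i^B\,dx$ occurring in $\mathcal{C}(\omega_\tau)$.

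To compute $\mathcal{C}(\omega_\tau)$ I would first take the partial fraction decomposition of the rational function $x_j^b\prod_{i'} f_{i'}^{\tau_{i'}}(x_{i'})$, writing $\omega_\tau$ as a polynomial in $x$ (times $dx$), plus polynomials in $x_i$ with no constant term (times $dx$) for $i\ge 1$, plus an exact/constant piece. By \eqref{E2} the Cartier operator sends each summand to a $k$-combination of powers of the same $x_i$, so the poles do not interact. The pole order of $\omega_\tau$ at $b_i$ is $d_i\tau_i$ when $i\ne j$ and $b+d_j\tau_j$ when $i=j$, and $\mathcal{C}$ keeps only the powers $x_i^{c}$ with $c\equiv \epsilon_i \bmod p$, sending them to $x_i^{B}\,dx$ with $B-\epsilon_i=(c-\epsilon_i)/p$. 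For $i\ne j$ the pole order is just $d_i\tau_i$ and a short estimate using $\gamma_i d_i=p-1$ (together with integrality of $\gamma_i(B-\epsilon_i)$ in the case $i=0$) gives $\gamma_i(B-\epsilon_i)\le \tau_i\le r-\tau_{-1}$.

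The only real obstacle is the diagonal case $i=j$, where the pole order $b+d_j\tau_j$ can be large and a naive estimate overshoots $r$; the survival congruence imposed by $\mathcal{C}$ is what rescues the bound. Let $c^\ast=b+d_j\tau_j-\delta$ be the largest integer with $c^\ast\equiv\epsilon_j\bmod p$ and $c^\ast\le b+d_j\tau_j$, where $\delta=(b+d_j\tau_j-\epsilon_j)\bmod p\in[0,p)$. Using $\gamma_j d_j=p-1$, the desired inequality $\gamma_j(B-\epsilon_j)\le\tau_j$ reduces to $\gamma_j(b-\epsilon_j-\delta)\le\tau_j$. If $b-\epsilon_j-\delta\le 0$ this is trivial; otherwise the definition of $\delta$ gives $d_j\tau_j\equiv -(b-\epsilon_j-\delta)\bmod p$, and since $d_j\gamma_j=p-1$ forces $d_j^{-1}\equiv-\gamma_j\bmod p$, we obtain $\tau_j\equiv\gamma_j(b-\epsilon_j-\delta)\bmod p$. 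Here the constraint $\eta\in W_j$ is essential: it yields $b\le d_j$ when $j\ge1$ and $b\le d_0-2$ when $j=0$, so that $b-\epsilon_j-\delta\le d_j-1$ and hence $\gamma_j(b-\epsilon_j-\delta)\in[0,p)$. As $\tau_j\ge 0$, the congruence then forces $\tau_j\ge \gamma_j(b-\epsilon_j-\delta)$, exactly the bound needed. This is also where the estimate is sharp: for the key-term tuple of Definition \ref{Dkeyterm} one has $\delta=0$ and equality, recovering $\kappa(\mathcal{C}(\omega))\in Z_{j,r}$.

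Combining the two cases, every monomial $x_i^B y^{\tau_{-1}}dx$ occurring in $\mathcal{C}(\eta)$ satisfies $\tau_{-1}+\gamma_i(B-\epsilon_i)\le r$, hence lies in $Z_{i,\ell}$ for some $\ell\le r$, i.e.\ in $Y_{i,r}$; therefore $\mathcal{C}(\eta)\in\mathrm{span}(Y_{i,r}\mid 0\le i\le m)$. I expect the modular argument in the diagonal case $i=j$ to be the crux, since it is the only place where the hypothesis $p\equiv1\bmod L$ (through $\gamma_j d_j=p-1$) and the precise range of $b$ enter decisively; the off-diagonal poles and the reduction to a term-by-term estimate are routine.
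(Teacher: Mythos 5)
Your proof is correct, and it shares the paper's overall strategy---expand $\mathcal{C}(\eta)$ via Equation \eqref{Ecartier}, analyze pole by pole, and use \eqref{E2} to determine which powers of $x_i$ survive---but the execution of the crux differs. The paper argues contrapositively and more cheaply: fixing a target monomial $x_i^By^Rdx$ with $R \geq r+\epsilon_i\gamma_i-\gamma_iB+1$, it notes $\tau_{-1}=R$ forces $\tau_i \leq r-R \leq \gamma_i(B-\epsilon_i)-1$, whence $\deg_{x_i}\bigl(x_j^bf_i^{\tau_i}(x_i)\bigr) \leq b+\bigl(\gamma_i(B-\epsilon_i)-1\bigr)d_i = (B-\epsilon_i)p-B+b+\epsilon_i-d_i$, which falls short of the exact degree $(B-\epsilon_i)p+\epsilon_i$ demanded by \eqref{E2}; so your congruence machinery in the diagonal case (the shift $\delta$, the identity $d_j^{-1}\equiv-\gamma_j \bmod p$, and the forcing of $\tau_j \geq \gamma_j(b-\epsilon_j-\delta)$) is correct but stronger than needed---it computes the exact survival threshold where the paper only needs the crude inequality $b < B+d_i$. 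Your version buys two things. First, sharpness: as you observe, equality in your bound recovers exactly the key terms of Definition \ref{Dkeyterm}, which illuminates why the sets $Z_{j,r}$ are the right objects. Second, your explicit split into $i\neq j$ and $i=j$ is actually more careful than the paper's write-up: the paper's displayed chain uses the uniform degree bound $b+\tau_id_i$ together with the bounds ``$b\leq d_0-2$ if $i=0$, $b\leq d_i$ if $1\leq i\leq m$,'' which conflates the source index $j$ with the target index $i$ (when $i\neq j$ the bound on $b$ comes from $W_j$ and need not satisfy $b<B+d_i$); this is harmless precisely because $x_j^b$ contributes nothing to $\deg_{x_i}$ when $i\neq j$, which is the observation your case analysis makes explicit. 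One small remark: the linear-independence point is not needed for the span statement itself (a sum of monomials lies in their span regardless); it is needed only to conclude that monomials with nonzero net coefficient lie in $W_i$, and alternatively that membership follows directly from your own bound, since $R+\gamma_i(B-\epsilon_i)\leq r\leq (p-2)/2$ places the pair $(B,R)$ inside the triangle describing $W_i$.
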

%%%%%%%%%%%%%%%%%%%%%%%%%%%%%%%%%%%%%%%%%%%%%%%%%%%%%%%%%%%%%%%%%%%%%%%%%%%%%%%%%%%%%%%%%%%%%%%%%%%%%%%%%%%%%%%%%%%%%%%%%%%%

\begin{proof}
Fix $\sigma \in W_i$ with $0 \leq i \leq m$ and 
let $c_\sigma$ denote the coefficient of $\sigma$ in ${\mathcal C}(\eta)$.
It suffices to show that $\sigma \in Y_{i,r}$ whenever $c_\sigma \not = 0$.
Write $\sigma = x_i^{B}y^{R}dx$.  By Equation \eqref{Ecartier}, $c_\sigma=0$ unless $\tau_{-1} = R$. 
This gives that $\tau_{i} \leq r-R$. 
If $R \geq r+\epsilon_i\gamma_{i} - \gamma_{i}B+1$ then $\tau_{i} \leq \gamma_{i}B-\epsilon_i\gamma_{i}-1$. 
The degree of $x_i$ in $x_j^{b}f_{i}^{\tau_{i}}(x_i)$ satisfies 
\begin{align}
\text{deg}_{x_i}\left(x_j^{b}f_{i}^{\tau_{i}}(x_i)\right) &\leq b+\tau_{i}d_{i} \nonumber \\
&\leq b + \left(\gamma_{i}B-\epsilon_i\gamma_{i}-1\right)d_{i} \nonumber \\
&= (B-\epsilon_i)p-B+b+\epsilon_i-d_{i}. \nonumber
\end{align}
By the definition of $W_{i}$, if $i=0$ then $b \leq d_{0}-2$ and $B \geq 0$, 
and if $1 \leq i \leq m$ then $b \leq d_i$ and $B \geq 1$. So, $\text{deg}_{x_i}(x_j^{b}f_{i}^{\tau_{i}}(x_i)) < (B-\epsilon_i)p+\epsilon_i$. 
Thus, $c_\sigma=0$ when $R > r+\epsilon_i\gamma_{i} - \gamma_{i}B$.
\end{proof}

%%%%%%%%%%%%%%%%%%%%%%%%%%%%%%%%%%%%%%%%%%%%%%%%%%%%%%%%%%%%%%%%%%%%%%%%%%%%%%%%%%%%%%%%%%%%%%%%%%%%%%%%%%%%%%%%%%%%%%%
\begin{lemma}\label{GeneralClaim6}
Suppose $r \leq (p-2)/2$ and $0 \leq i \leq m$. Every element of $Y_{i,r}$ is a key term of ${\mathcal C}(\omega)$ for some $\omega \in H_i$.
\end{lemma}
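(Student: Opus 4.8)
The plan is to produce, for each $\sigma \in Y_{i,r}$, an explicit $\omega \in H_i$ with $\kappa(\mathcal{C}(\omega)) = \sigma$. Write $\sigma = x_i^B y^R dx$. By the definition of $Y_{i,r}$, the form $\sigma$ lies in some $Z_{i,\ell}$, where $\gamma_0 \leq \ell \leq r$ if $i = 0$ and $0 \leq \ell \leq r$ if $1 \leq i \leq m$; in either case $\ell \leq r \leq (p-2)/2$. Membership in $Z_{i,\ell}$ means precisely that $R = \ell + \epsilon_i\gamma_i - \gamma_i B$, i.e. $(B - \epsilon_i)\gamma_i = \ell - R$.

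The candidate preimage is $\omega = x_i^B y^\ell dx$, obtained from $\sigma$ by keeping the $x_i$-exponent fixed and raising the $y$-exponent from $R$ to $\ell$. With this choice Definition \ref{Dkeyterm} gives $\kappa(\mathcal{C}(\omega)) = x_i^B y^{\ell - (B-\epsilon_i)\gamma_i} dx$, and the relation $\ell - R = (B-\epsilon_i)\gamma_i$ shows this is exactly $\sigma$. So the key-term identity is immediate, and the real content of the lemma is showing that $\omega$ actually lies in $H_i$.

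I would verify the two defining conditions of $H_i$ in turn. The condition $\ell \geq (B - \epsilon_i)\gamma_i$ required for $H_i$ is equivalent, via $\ell - R = (B-\epsilon_i)\gamma_i$, to $R \geq 0$, which holds because $\sigma \in W_i$. For $\omega \in W_i$, the constraints $\ell \geq 0$ and $B \geq (1+\epsilon_i)/2$ are inherited directly from $\sigma \in W_i$ (the $x_i$-exponent is unchanged and $\ell \geq R \geq 0$), so the only substantive point is the upper bound $\ell \leq (p - 2 + \epsilon_i\gamma_i) - \gamma_i B$ cutting out the triangle $W_i$.

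I expect this last inequality to be the crux, and it is exactly where the hypothesis $r \leq (p-2)/2$ enters. Substituting $\gamma_i B = \ell + \epsilon_i\gamma_i - R$ from the $Z_{i,\ell}$ relation rewrites the upper bound as $p - 2 - \ell + R$, so the inequality to be checked reduces to $2\ell \leq p - 2 + R$. Since $\ell \leq r \leq (p-2)/2$ gives $2\ell \leq p - 2$, and $R \geq 0$, this holds. Intuitively, the bound $r \leq (p-2)/2$ is precisely what guarantees that lifting the $y$-exponent up to $\ell$ does not push $(B,\ell)$ past the hypotenuse of $W_i$. Once both conditions are confirmed, $\omega \in H_i$ with $\kappa(\mathcal{C}(\omega)) = \sigma$, which proves the lemma.
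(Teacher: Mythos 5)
Your proof is correct and is essentially the paper's argument: your candidate $\omega = x_i^B y^\ell dx$ coincides with the paper's $\omega = x_i^B y^\rho dx$, since the relation $R = \ell + \epsilon_i\gamma_i - \gamma_i B$ defining $Z_{i,\ell}$ gives $\rho := R - \epsilon_i\gamma_i + \gamma_i B = \ell$. Your verification of $H_i$-membership via $2\ell \leq p-2+R$ is the same use of the hypothesis $\ell \leq r \leq (p-2)/2$ that the paper makes, only spelled out in more detail.
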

%%%%%%%%%%%%%%%%%%%%%%%%%%%%%%%%%%%%%%%%%%%%%%%%%%%%%%%%%%%%%%%%%%%%%%%%%%%%%%%%%%%%%%%%%%%%%%%%%%%%%%%%%%%%%%%%%%%%%%%

\begin{proof}
Let $x_{i}^{B}y^{R}dx \in Y_{i,r}$. Define $\omega=x_{i}^{B}y^{\rho} dx$ where 
$\rho = R- \epsilon_i \gamma_{i}+\gamma_{i}B$.
It suffices to show that $\omega \in H_i$, since $\kappa({\mathcal C}(\omega)) = x_{i}^{B}y^{R}dx$.
If $x_{i}^{B}y^{R}dx \in Y_{i,r}$ then $R \leq r+\epsilon_i\gamma_{i}-\gamma_{i}B$, so 
$\rho \leq r.$ The 1-form $x_{i}^{B}y^{\rho}dx$ is in $H_i$ because $B \geq 0$, 
and $- \epsilon_i\gamma_{i}+\gamma_{i}B \leq \rho \leq (p-2)/2$.
\end{proof}

\begin{lemma} \label{GeneralClaim7}
If $\eta \in A$, 
then ${\mathcal C}(\eta)$ is contained in 
$\mathrm{span}\left\{{\mathcal C}(\omega)\  | \ \omega \in H \right\}$.
\end{lemma}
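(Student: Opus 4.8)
The plan is to combine Lemmas \ref{GeneralClaim4} and \ref{GeneralClaim6} with the triangularity furnished by Lemmas \ref{GeneralClaim1} and \ref{GeneralClaim3} to exhibit ${\mathcal C}(\eta)$ as an explicit combination of the vectors ${\mathcal C}(\omega)$ with $\omega \in H$. First I would record the observation that every $\eta = x_j^b y^r dx \in A$ has $y$-exponent $r \leq (p-2)/2$: the boundary line $r = (b-\epsilon_j)\gamma_j$ separating $H_j$ from $A_j$ meets the hypotenuse $r = (p-2+\epsilon_j\gamma_j) - \gamma_j b$ of the triangle describing $W_j$ exactly at $r = (p-2)/2$, so no lattice point of $A_j$ can lie above this height. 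This makes the hypotheses of Lemmas \ref{GeneralClaim4} and \ref{GeneralClaim6} available for $\eta$, and Lemma \ref{GeneralClaim4} gives ${\mathcal C}(\eta) \in \mathrm{span}(Y_{i,r} \mid 0 \leq i \leq m)$.

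The heart of the argument is to show that the single space $V_r := \mathrm{span}(Y_{i,r} \mid 0 \leq i \leq m)$ is already contained in $\mathrm{span}\{{\mathcal C}(\omega) \mid \omega \in H\}$; since ${\mathcal C}(\eta) \in V_r$, this finishes the proof. By Lemma \ref{GeneralClaim6}, each 1-form $\sigma \in \cup_i Y_{i,r}$ equals the key term $\kappa({\mathcal C}(\omega_\sigma))$ of a unique $\omega_\sigma \in H$, where $\omega_\sigma$ has $y$-exponent $\rho_\sigma \leq r$. I would prove that the family $\{{\mathcal C}(\omega_\sigma)\}_{\sigma \in \cup_i Y_{i,r}}$ is a basis of $V_r$. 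For the dimension count it is essential that each ${\mathcal C}(\omega_\sigma)$ actually lies inside $V_r$: because $\rho_\sigma \leq r \leq (p-2)/2$, Lemma \ref{GeneralClaim4} applied to $\omega_\sigma$ places ${\mathcal C}(\omega_\sigma)$ in $\mathrm{span}(Y_{i,\rho_\sigma} \mid 0 \leq i \leq m)$, and the nesting $Y_{i,\rho_\sigma} \subseteq Y_{i,r}$, immediate from the definition of $Y_{i,r}$ as a union over $\ell \leq r$, keeps it inside $V_r$.

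Linear independence then comes from a triangularity argument. Order the finite index set $\cup_i Y_{i,r}$ so that the associated forms $\omega_\sigma$ increase with respect to $\prec$, and consider the square matrix whose $(\sigma', \omega_\sigma)$-entry is the coefficient of $\sigma'$ in ${\mathcal C}(\omega_\sigma)$. Lemma \ref{GeneralClaim1} shows the diagonal entry, the coefficient of $\sigma = \kappa({\mathcal C}(\omega_\sigma))$ in ${\mathcal C}(\omega_\sigma)$, is nonzero, while Lemma \ref{GeneralClaim3} shows that for $\omega_{\sigma'} \prec \omega_\sigma$ the coefficient of $\kappa({\mathcal C}(\omega_\sigma))$ in ${\mathcal C}(\omega_{\sigma'})$ vanishes; hence the matrix is triangular with nonzero diagonal, so invertible. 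Since the forms in $\cup_i Y_{i,r}$ are distinct basis elements, $\dim V_r = \#(\cup_i Y_{i,r})$, so the vectors ${\mathcal C}(\omega_\sigma)$ are $\dim V_r$ linearly independent elements of $V_r$ and therefore span it. Consequently ${\mathcal C}(\eta) \in V_r = \mathrm{span}\{{\mathcal C}(\omega_\sigma)\} \subseteq \mathrm{span}\{{\mathcal C}(\omega) \mid \omega \in H\}$.

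The step I expect to require the most care is the self-referential support claim in the second paragraph: one must be sure that applying ${\mathcal C}$ to the lifts $\omega_\sigma$ does not create components outside $V_r$, for otherwise the triangular matrix would only describe a projection of the ${\mathcal C}(\omega_\sigma)$ and the spanning conclusion would fail. This is exactly what the bound $\rho_\sigma \leq r$ from Lemma \ref{GeneralClaim6} together with the nesting of the $Y_{i,\ell}$ is designed to guarantee, so the main obstacle is bookkeeping these two facts in tandem rather than any new estimate.
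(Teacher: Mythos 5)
Your proof is correct and takes essentially the same route as the paper: reduce via Lemmas \ref{GeneralClaim4} and \ref{GeneralClaim6} to the span of the key terms $\kappa({\mathcal C}(\omega))$, then use the triangularity supplied by Lemmas \ref{GeneralClaim1} and \ref{GeneralClaim3} to trade key terms for the actual vectors ${\mathcal C}(\omega)$, $\omega \in H$. The only difference is packaging: the paper runs a greedy elimination (repeatedly subtracting $\nu\,{\mathcal C}(\omega^*)$ for the largest $\omega^*$ with nonzero key-term coefficient), whereas you encode the same triangularity in an invertible matrix to conclude that the ${\mathcal C}(\omega_\sigma)$ form a basis of $\mathrm{span}(Y_{i,r} \mid 0 \leq i \leq m)$ --- and in doing so you make explicit the support containment ${\mathcal C}(\omega_\sigma) \in \mathrm{span}(Y_{i,r} \mid 0 \leq i \leq m)$, via $\rho_\sigma \leq r$ and the nesting of the $Y_{i,\ell}$, which the paper's iteration uses only implicitly.
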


\begin{proof}
Write $\eta = x_{j}^{b}y^{r}dx$ for some $0 \leq j \leq m$.  
Since $\eta \in A$, $r \leq (p-2)/2$.
By Lemma \ref{GeneralClaim4}, ${\mathcal C}(\eta) \in \text{span}(Y_{i,r} \mid 0 \leq i \leq m)$. 
By Lemma \ref{GeneralClaim6}, $\mathcal{C}(\eta) \in \mathrm{span}\{\kappa({\mathcal C}(\omega)) \mid \omega \in H\}.$ 
Let $\omega^* = x_j^By^Rdx$ be the largest 1-form in $H$ for which the coefficient of $\kappa({\mathcal C}(\omega^*))$ in ${\mathcal C}(\eta)$ is non-zero. From the proof of Lemma \ref{GeneralClaim6}, we see that $R < (p-2)/2.$
Let $\nu \in k^{\times}$ be such that the coefficient of $\kappa({\mathcal C}(\omega^*))$ is zero 
in ${\mathcal C}(\eta) - \nu {\mathcal C}(\omega^*)$.
If $\tau$ is a monomial in ${\mathcal C}(\omega^*)$, then $\tau = \kappa({\mathcal C}(\omega^{**}))$ for some $\omega^{**} \in H.$ Lemma \ref{GeneralClaim3} implies that $\omega^{**} \prec \omega^*.$ Therefore, the terms in ${\mathcal C}(\eta) - \nu {\mathcal C}(\omega^*)$ are key terms of ${\mathcal C}(\omega^{**})$ for $\omega^{**} \prec \omega^*.$
Repeating this process shows that ${\mathcal C}(\eta)$ can be written as 
a linear combination $\sum_{\omega \in H} \nu_{\omega}{\mathcal C}(\omega)$.
%Applying Lemma \ref{GeneralClaim3}, with $\omega^*=\omega$ and $\eta=\omega'$, shows that $\omega^* \prec \eta$.
%For some $\nu \in k^{*}$, the coefficient of $\kappa({\mathcal C}(\omega^*)$ is zero 
%in ${\mathcal C}(\eta) - \nu {\mathcal C}(\omega^*)$. 
%By {\bf need extra Lemma}, all terms of ${\mathcal C}(\eta) - \nu {\mathcal C}(\omega^*)$ are smaller than $\kappa({\mathcal C}(\omega^*))$.
%So every term of ${\mathcal C}(\eta) - \nu{\mathcal C}(\omega^*)$ is in the span of the 
%$1$-forms which are key terms of ${\mathcal C}(\omega)$ 
%for $\omega \in H$ with $\omega \prec \omega^*$. 
%Repeating this process shows that ${\mathcal C}(\eta)$ can be written as 
%a linear combination $\sum_{\omega \in H} \nu_{\omega}{\mathcal C}(\omega)$.
\end{proof}

\subsection{Main result} \label{Smainresult}

\begin{theorem}
Let $X$ be an Artin-Schreier curve with equation $y^p-y=f(x)$, with $f(x) \in k(x)$. 
Suppose $f(x)$ has $m+1$ poles, with orders $d_0, \dots, d_{m}$ and 
let $L={\rm LCM}(d_0, \ldots, d_{m})$. 
If $p \equiv 1 \bmod L$, then the $a$-number of $X$ is  
\[a_X = \sum_{j=0}^{m} a_{j}, \text{ where } \nonumber 
a_j = \begin{cases} (p-1)d_j/4 & \text{if $d_j$ even,}\\
  (p-1)(d_j-1)(d_j+1)/4d_j & \text{if $d_j$ odd.} \end{cases} \nonumber\]
\end{theorem}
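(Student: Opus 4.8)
The plan is to convert the two preceding subsections into an exact computation of $\mathrm{rank}(M)$ and then reduce the theorem to a lattice-point count. Since $W$ is a basis of $H^0(X,\Omega^1_X)$, its cardinality is the genus, so $g_X=\#W=\#H+\#A$ because $W=H\sqcup A$. Proposition \ref{Pindep} shows the $\#H$ columns of $M$ indexed by $H$ are linearly independent, giving $\mathrm{rank}(M)\ge\#H$; conversely, Lemma \ref{GeneralClaim7} shows that for every $\eta\in A$ the column ${\mathcal C}(\eta)$ lies in $\mathrm{span}\{{\mathcal C}(\omega)\mid\omega\in H\}$, so the image of ${\mathcal C}$ is spanned by the columns indexed by $H$ and $\mathrm{rank}(M)\le\#H$. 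Hence $\mathrm{rank}(M)=\#H$, and since $a_X=g_X-\mathrm{rank}(M)$ we obtain the identity $a_X=\#W-\#H=\#A=\sum_{j=0}^{m}\#A_j$. It therefore suffices to prove $\#A_j=a_j$ for each $j$.

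To count $A_j$, I would substitute $c=b-\epsilon_j$. The divider $r=(b-\epsilon_j)\gamma_j$ becomes $r=c\gamma_j$ and the hypotenuse $r=(p-2+\epsilon_j\gamma_j)-\gamma_j b$ becomes $r=(p-2)-\gamma_j c$, uniformly in $j$, while the constraint $b\ge(1+\epsilon_j)/2$ reduces to $c\ge1$ over the range that contributes. Since $A_j$ consists of the lattice points strictly below the divider and on or below the hypotenuse, the column of fixed $c$ contributes $\min(c\gamma_j-1,\,(p-2)-\gamma_j c)+1$ admissible values of $r$. Using $p-1=\gamma_j d_j$, the two bounds agree exactly at $c=d_j/2$, which is where the binding constraint switches from the divider to the hypotenuse; the columns that contribute are $1\le c\le d_j-1$.

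Splitting this sum at $c=d_j/2$ into a head $\sum c\gamma_j$ and a tail $\sum\gamma_j(d_j-c)$, the substitution $c\mapsto d_j-c$ rewrites the tail as a sum of the same form, and evaluating the resulting arithmetic series gives $\#A_j=\gamma_j d_j^2/4$ when $d_j$ is even and $\#A_j=\gamma_j(d_j^2-1)/4$ when $d_j$ is odd. Substituting $\gamma_j=(p-1)/d_j$ turns these into $(p-1)d_j/4$ and $(p-1)(d_j-1)(d_j+1)/(4d_j)$ respectively, which is exactly $a_j$; summing over $j$ proves the theorem. The main obstacle is the bookkeeping at the crossover: whether $d_j/2$ is an integer decides whether $c=d_j/2$ is a genuine column (contributing $(p-1)/2$, with the divider and hypotenuse bounds coinciding) or falls strictly between the integer columns $(d_j-1)/2$ and $(d_j+1)/2$, and this is exactly what separates the even and odd formulas for $a_j$. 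The substitution $c=b-\epsilon_j$ absorbs the difference between the cases $j=0$ and $1\le j\le m$, so once the crossover is handled both are treated simultaneously.
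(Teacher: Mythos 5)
Your proposal is correct and follows essentially the same route as the paper: the rank computation $\mathrm{rank}(M)=\#H$ via Proposition \ref{Pindep} and Lemma \ref{GeneralClaim7} is identical, and the reduction to a lattice-point count in the triangle cut by the line $r=(b-\epsilon_j)\gamma_j$ is the same geometry. The only difference is bookkeeping: the paper computes $\#H_j$ column-by-column and sets $a_j=\#W_j-\#H_j$, whereas you count $\#A_j$ directly with the substitution $c=b-\epsilon_j$, which uniformizes the $j=0$ and $j\geq 1$ cases and handles the even/odd crossover at $c=d_j/2$ correctly, yielding the same values of $a_j$.
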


\begin{proof}
By Proposition \ref{Pindep} and Lemma \ref{GeneralClaim7}, 
the rank of the Cartier-Manin matrix is equal to $\sum_{j=0}^m \#H_{j}$.
Since $a = g - \text{rank}(M)$ and $g=\#W$, this implies $a = \sum_{j=0}^{m} (\#W_{j} - \#H_{j})$. 
It thus suffices to show that $\#W_j - \#H_j = a_j$ for the value of $a_j$ 
as stated for $0 \leq j \leq m$.

Recall that $\#W_{j}=(p-1)(d_{j}+\epsilon_j)/2$.
We will count the ordered pairs $(b,r)$ corresponding to $x^{b}y^{r}dx \in H_{j}$. The lines $r=p-2+\epsilon_j\gamma_{j}-\gamma_{j}b$ and $r=\gamma_{j}b-\epsilon_j\gamma_{j}$ intersect at $b=d_{j}/2 + \epsilon_j - 1/2\gamma_{j}$. The largest value of $b$ appearing in $H_{j}$ is 
\[b' = \begin{cases} d_{j}/2 + \epsilon_j - 1 & \text{if $d_{j}$ is even,} \\ d_{j}/2 + \epsilon_j - 1/2 & \text{if $d_{j}$ is odd.} \end{cases}\]
Let $b_j = 0$ if $j=0$ and $b_j = 1$ if $j \neq 0$.  Then
\begin{align}
a_j &= \# W_j - \#H_j \nonumber \\
  &= (p-1)(d_{j}+\epsilon_j)/2 - \sum_{b_j}^{b'} \left(p-1+2\epsilon_j\gamma_{j}-2\gamma_{j}b\right) \nonumber \\
  &= (p-1)(d_{j}+\epsilon_j)/2 - \left(p-1+2\epsilon_j\gamma_{j}\right)(b'-b_j+1) + 2\gamma_{j}b'\left(b'+1\right)/2 \nonumber \\
  &= \begin{cases} (p-1)d_{j}/4 & \text{if $d_{j}$ even,} \\ (p-1)(d_{j}-1)(d_{j}+1)/4d_j & \text{if $d_{j}$ odd.} \end{cases} \nonumber
\end{align}
\end{proof}

\subsection{Open questions}

Here are two questions that emerge from this work:

{\bf Question 1:} 
Under the condition $p \equiv 1 \bmod L$, are the Ekedahl-Oort type and the Dieudonn\'e module of the Jacobian of the Artin-Schreier curve $X:y^p-y=f(x)$ determined by the orders of the poles of $f(x)$?

{\bf Question 2:} What are other families of curves for which the $p$-rank, Newton polygon, $a$-number, and Ekedahl-Oort type of the fibres of the family are constant?  

For example, when $p=2$, the Ekedahl-Oort type (and $2$-rank and $a$-number) of an Artin-Schreier (hyperelliptic) curve depend only on the orders of the poles of $f(x)$ \cite{ElkinPries}.

\bibliographystyle{plain}
\bibliography{farnellpries}

\vspace{.2in}
\noindent Shawn Farnell \\
Department of Mathematics, Kenyon College\\
Gambier, Ohio 43022, USA\\
farnells@kenyon.edu

\vspace{.2in}
\noindent Rachel Pries \\
Department of Mathematics, Colorado State University \\
Fort Collins, CO 80523-1874, USA\\ 
pries@math.colostate.edu
\end{document}